\newtheorem{thm}{Theorem}[section]
\newtheorem{theorem}[thm]{Theorem}
\newtheorem{proposition}[thm]{Proposition}
\newtheorem{lemma}[thm]{Lemma}
\newtheorem{conjecture}[thm]{Conjecture}
\newtheorem{corollary}[thm]{Corollary}
\newtheorem{definition}[thm]{Definition}
\newtheorem{remark}[thm]{Remark}
\newcommand{\newword}[1]{\textbf{\emph{#1}}}
\newcommand{\RR}{\mathbb{R}}
\newcommand{\M}{\mathcal{M}}
\newcommand{\B}{\mathcal{B}}
\newcommand{\union}{\cup}
\DeclarePairedDelimiter\parentheses{\lparen}{\rparen}
\newcommand{\rk}[1]{\operatorname{rk} \parentheses*{#1}}
\newcommand{\drk}[1]{\operatorname{rk}^{*} \parentheses*{#1}}
\newcommand{\NC}[1]{\operatorname{NC} \parentheses*{#1}}
\newcommand{\nbd}[1]{\operatorname{nbd} \parentheses*{#1}}
\newcommand{\cw}[1]{\operatorname{cw} \parentheses*{#1}}
\newcommand{\ccw}[1]{\operatorname{ccw} \parentheses*{#1}}
\def\sample{\min\nolimits_{\Pi \in \NC{[s]}} \nbd{E,\Pi}}
\begin{document}

\title{The facets of the matroid polytope and the independent set polytope of a positroid}

\author{Suho Oh and David Xiang}
\maketitle

\begin{abstract}
A positroid is a special case of a realizable matroid that arose from the study of the totally nonnegative part of the Grassmannian by Postnikov \cite{Postnikov}. In this paper, we study the facets of its matroid polytope and the independent set polytope. This allows one to describe the bases and independent sets directly from the decorated permutation, bypassing the use of the Grassmann necklace. We also describe a criterion for determining whether a given cyclic interval is a flat or not using the decorated permutation, then show how it applies to checking the concordancy of positroids.
\end{abstract}

\section{Introduction}

A positroid comes from taking the nonzero maximal minors in a full-rank $k \times n$ matrix with all maximal minors nonnegative \cite{Postnikov}. Positroids have a number of nice combinatorial properties. In particular, positroids are in bijection with certain interesting classes of combinatorial objects, such as Grassmann necklaces and decorated permutations. Recently, positroids have seen increased applications in physics, with use in the study of scattering amplitudes \cite{arkani} and the study of shallow water waves \cite{kodama}.

A matroid can be described in multiple ways, using bases, independent sets, circuits, the rank function, flats, etc. There have been multiple results on the bases of a positroid: the set of bases can be described nicely from the Grassmann necklace \cite{Oh}, and the (matroid) polytope coming from the bases can be described using the cyclic intervals \cite{ARW},\cite{LP}. Using the rank-function described in \cite{OR}, we will describe all the facets of this polytope using the decorated permutation. This gives a way to describe the bases without using the Grassmann necklace. We also describe the facets of the independent set polytope of the positroid, using the decorated permutation. This gives a way to describe all the independents sets, again without relying on the Grassmann necklace. Lastly, we will show how the results can be applied to checking the concordancy of positroids.

The structure of the paper is as follows. In Section $2$, we go over the background materials needed for this paper, including the basics of matroids, positroids and decorated permutations. In section $3$, we go over the rank function of a positroid. In section $4$ we describe the interval flats and inseparable flats of a positroid. In section $5$, we state our main result. In section 6, we show how our results can be applied to checking concordancy of positroids.

\section{Background materials}

\subsection{Matroids}

In this section we review the basics of matroids that we will need. We refer the reader to \cite{Oxley} for a more in-depth introduction to matroid theory.

\begin{definition}
A \newword{matroid} is a pair $(E,\B)$ consisting of a finite set $E$, called the \newword{ground set} of the matroid, and a nonempty collection of subsets $\B = \B(\M)$ of $E$, called the \newword{bases} of $\M$, which satisfy the \newword{basis exchange axiom}:

If $B_1,B_2 \in \B$ and $b_1 \in B_1 \setminus B_2$, then there exists $b_2 \in B_2 \setminus B_1$ such that $B_1 \setminus \{b_1\} \union \{b_2\} \in \B$. 
\end{definition}

A subset $F \subseteq E$ is called \newword{independent} if it is contained in some basis. All maximal independent sets contained in a given set $A \subseteq E$ have the same size, called the \newword{rank} $\rk{A}$ of $A$. The rank of the matroid $\M$, denoted as $\rk{\M}$, is given by $\rk{E}$. An element $e \in E$ is a \newword{loop} if it is not contained in any basis. An element $e \in E$ is a \newword{coloop} if it is contained in all bases. A matroid $\M$ is \newword{loopless} if it does not contain any loops. The \newword{dual} of $\M$ is a matroid $\M^{*} = (E,\B')$ where $\B' = \{E \setminus B | B \in \B(\M)\}$.

\begin{remark}
Due to the reason explained in Remark~\ref{rem:nofixed}, in this paper, we will be sticking to matroids that have no loops nor coloops.
\end{remark}

 The \newword{closure} of a set $A$ is denoted as $\bar{A}$, and stands for the biggest set that contains $A$ and has the same rank. A set is a \newword{flat} if its closure is same as itself. A set $E$ is called \newword{separable} in a matroid if one can partition $E$ into $E_1$ and $E_2$ such that $\rk{E} = \rk{E_1} + \rk{E_2}$.

\begin{remark}
In this paper, we will always use $[n] := \{1,\ldots,n\}$ as our ground set, reserving the usage of $E$ for subsets of the ground set we analyze. A matroid of rank $d$ will have bases in the set ${[n] \choose d}$ which stands for all cardinality $d$-subsets of $[n]$. Also for ease of reading we will use $B \in \M$ to denote $B \in \B(\M)$.
\end{remark}



\begin{definition}
Given a matroid $\M = ([n],\B)$, the (basis) matroid polytope $\Gamma_{\M}$ of $\M$ is the convex hull of the indicator vectors of the bases of $\M$:
$$\Gamma_{\M} = convex\{e_B|B \in \B\} \subset \RR^n,$$
where $e_B := \sum_{i \in B} e_i$ and $\{e_1,\ldots,e_n\}$ is the standard basis of $\RR^n$.
\end{definition}

\begin{definition}
Given a matroid $\M = ([n],\B)$, the independent set polytope $P_{\M}$ of $\M$ is the convex hull of the indicator vectors of the independent sets of $\M$:
$$P_{\M} = convex\{e_I|I \subset B \in \B\} \subset \RR^n,$$
where $e_I := \sum_{i \in I} e_i$ and $\{e_1,\ldots,e_n\}$ is the standard basis of $\RR^n$.
\end{definition}

There is a nice description for the facets of these polytopes.

\begin{theorem}[Proposition 2.6. of \cite{Feichtner2005}, Definition 2.3. of \cite{ku_2019}]
\label{thm:matpoly}
The following is a system of inequalities (with one equality) for the matroid polytope of $\M = ([n],\B)$:
\begin{itemize}
\item $x_e \geq 0$, $e \in [n]$,
\item $\sum_e x_e = d$,
\item $x_F := \sum_{e \in F} x_e \leq \rk{F}$, $F$ is a \newword{flacet} : $\emptyset \subsetneq F \subsetneq [n]$ is a flat of $\M$ where $F$ is inseparable in $\M$ and $F^c$ is inseparable in the dual of $\M$.
\end{itemize}
\end{theorem}

\begin{theorem}[Theorem 40.5. of \cite{Schrijver}]
\label{thm:indpoly}
If $\M = ([n],\B)$ is loopless, the following is a minimal system of inequalities for the independent set polytope of $\M$:
\begin{itemize}
\item $x_e \geq 0$, $e \in [n]$,
\item $x_F := \sum_{e \in F} x_e \leq \rk{F}$, $F$ is a nonempty inseparable flat of $\M$,
\end{itemize}
\end{theorem}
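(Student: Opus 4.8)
The plan is to begin from the classical description of the independent set polytope due to Edmonds, namely that $P_{\M} = \{x \in \RR^E : x_e \ge 0 \text{ for all } e, \ \sum_{e \in A} x_e \le rk(A) \text{ for all } A \subseteq E\}$; this can be recalled via the greedy algorithm together with linear programming duality (equivalently, total dual integrality of the rank system). Granting this, the statement splits into two tasks: first, pruning the rank inequalities down to those indexed by nonempty inseparable flats without changing the polytope, and second, showing the resulting system is minimal, i.e.\ each listed inequality is facet-defining and the facets are distinct. Throughout I use that $\M$ is loopless, which guarantees $P_{\M}$ is full-dimensional, since $0, e_1, \dots, e_n$ are then all independent sets, so "facet" means a face of dimension $n-1$.

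For the pruning I would work in two stages. First, for any $A$ I replace $x_A \le rk(A)$ by the inequality for its closure $\bar A$: since $A \subseteq \bar A$, $rk(\bar A) = rk(A)$, and $x \ge 0$, the inequality $x_{\bar A} \le rk(\bar A)$ already implies $x_A \le rk(A)$, so only flats are needed. Second, if $F$ is a separable flat, write $F = F_1 \union F_2$ (disjoint) with $rk(F) = rk(F_1) + rk(F_2)$. Because $F$ is a flat, the restriction $\M|_F$ splits as a direct sum, so $F_1$ and $F_2$ may be taken to be flats of $\M|_F$; and closures of subsets of $F$ computed in $\M|_F$ agree with those in $\M$, so $F_1, F_2$ are flats of $\M$. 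Then $x_F = x_{F_1} + x_{F_2} \le rk(F_1) + rk(F_2) = rk(F)$ shows the $F$-inequality is implied, and iterating until every piece is inseparable leaves exactly the inseparable-flat inequalities.

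For minimality, the nonnegativity facets are easy: the face $\{x \in P_{\M} : x_e = 0\}$ is the independent set polytope of the deletion $\M \setminus e$, which is loopless, hence full-dimensional in $\RR^{E \setminus e}$ and of dimension $n-1$. The substance is the inequality $x_F \le rk(F)$ for an inseparable flat $F$, whose face $\mathcal{F}$ consists of the independent sets $I$ with $I \intersect F$ a basis of $\M|_F$. To see this is a facet I would show that any linear equation $\langle c, x \rangle = d$ valid on all of $\mathcal{F}$ is a scalar multiple of $x_F = rk(F)$. Two observations start this: for $e \notin F$ and a basis $B$ of $\M|_F$, both $B$ and $B \union \{e\}$ lie on $\mathcal{F}$ (using that $F$ is a flat, so $rk(F \union \{e\}) = rk(F)+1$), forcing $c_e = 0$; and for any single exchange $B \setminus \{b\} \union \{b'\}$ between bases of $\M|_F$, validity forces $c_b = c_{b'}$.

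The hard part, and where inseparability is essential, is the last step: these single-exchange relations must force $c$ to be constant on all of $F$. This is precisely the statement that a connected (inseparable, loopless) matroid has a connected single-exchange structure — equivalently, that the relation "$e$ and $f$ lie in a common circuit" has a single class. I would prove it by propagating along fundamental circuits: $c$ is constant on each fundamental circuit $C(f, B)$, and connectivity chains these circuits together across $F$. Concluding $c = \lambda \mathbf{1}_F$ with $d = \lambda\, rk(F)$ gives $\dim \mathcal{F} = n-1$. Distinctness of the facets is then immediate, since the supporting functional recovers $F$ (respectively the element $e$) and the two families plainly differ, which yields that the system is minimal.
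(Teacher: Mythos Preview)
The paper does not supply a proof of this statement at all: it is quoted as Theorem~40.5 of \cite{Schrijver} and used as background. So there is no ``paper's own proof'' to compare against.

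Your argument is correct and is essentially the standard one found in Schrijver's book. The pruning steps are sound: passing from $A$ to $\bar A$ uses $x\ge 0$, and for a separable flat $F$ the pieces of any separation of $\M|_F$ are separators, hence flats of $\M|_F$, and since $F$ is a flat of $\M$ one has $\mathrm{cl}_{\M}(A)=\mathrm{cl}_{\M|_F}(A)$ for $A\subseteq F$, so they are flats of $\M$ as you claim. For the facet part, your two observations (adding $e\notin F$ to a basis of $\M|_F$ stays independent because $F$ is closed; single exchanges within $\M|_F$ equate coefficients) are exactly right, and the ``hard part'' is indeed the connectivity statement. Your sketch there is fine: in a connected loopless matroid any two elements lie in a common circuit $C$, and extending $C\setminus\{e'\}$ to a basis $B$ makes $B\setminus\{e\}\cup\{e'\}$ a basis, giving $c_e=c_{e'}$ in one step. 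The distinctness of the facets then follows because a facet of a full-dimensional polytope determines its supporting inequality up to positive scaling, which recovers $F$ (respectively the sign pattern of $-e_i$).
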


In this paper, we will show a method to read off the $F$'s and their ranks for both of those polytopes when dealing with a positroid, directly from the associated decorated permutation.

\subsection{Positroids}

In this section we go over the basics of positroids. Positroids were originally defined in \cite{Postnikov}. Positroids are matroids whose bases are the column sets corresponding to nonzero maximal minors in a matrix such that all maximal minors are nonnegative. For example, the following matrix has nonnegative maximal minors:
\[ A = \left( \begin{array}{cccc}
1 & 0 & -3 & -1 \\
0 & 1 & 4 & 0\\
\end{array} \right).\]

The nonzero maximal minors come from the column sets $\{1,2\}, \{1,3\},\{2,3\},\{2,4\},\{3,4\}$. This collection forms a positroid. Positroids are in bijection with decorated permutations:

\begin{definition}
A \newword{decorated permutation} of the set $[n]$ is a bijection $\pi$ of $[n]$ whose fixed points are colored either white or black. 
\end{definition}


\begin{figure}
    \centering
        
   \begin{tikzpicture}
\draw[black] (0:4) arc (0:360:40mm);
    \foreach \phi in {1,...,14}{
    \node[circle,draw,text=black,fill=white] (\phi) at (110+360/14 * -\phi:4cm) {$\phi$};
      }
    \draw [thick, -latex] (1) to [bend right] (2);
    \draw [thick, -latex] (2) to (8);
    \draw [thick, -latex] (3) to (6);
    \draw [thick, -latex] (4) to [bend right] (7);
    \draw [thick, -latex] (5) to [bend right] (9);
    \draw [thick, -latex] (6) to [bend left] (4);
    \draw [thick, -latex] (7) to [bend left] (5);
    \draw [thick, -latex] (8) to (14);
    \draw [thick, -latex] (9) to (13);
    \draw [thick, -latex] (10) to (3);
    \draw [thick, -latex] (11) to [bend left] (10);
    \draw [thick, -latex] (12) to [bend left] (11);
    \draw [thick, -latex] (13) to [bend right] (1);
    \draw [thick, -latex] (14) to [bend left] (12);

   \end{tikzpicture}
    \caption{A decorated permutation $\pi = [2,8,6,7,9,4,5,14,13,3,10,11,1,12].$}
		\label{fig:permutation}
\end{figure}

Take a look at the decorated permutation (since it has no fixed points, it is the usual permutation) in Figure~\ref{fig:permutation}. It is the permutation $[2,8,6,7,9,4,5,14,13,3,10,11,1,12]$ under the usual one-line notation.

Given $a,b \in [n]$, we define the (cyclic) interval $[a,b]$ to be the set $\{x| x \leq_a b\}$, where the \newword{cyclically shifted order} $<_i$ on the set $[n]$ is the following total order:
$$i <_i i+1 <_i \cdots <_i n <_i 1 <_i \cdots <_i i-1. $$

\begin{remark}
When we are dealing with positroids, we will always envision the ground set $[n]$ to be drawn on a circle. We will say that $a_1,\ldots,a_t \in [n]$ are \newword{cyclically ordered} if there exists some $i \in [n]$ such that $a_1 <_i \cdots <_i a_t$. 
Cyclic intervals play an important role in the structure of a positroid \cite{Knutson}. 
\end{remark}


\begin{theorem}[\cite{ARW}, \cite{LP}]
\label{thm:posipoly}
A matroid $\M$ of rank $d$ on $[n]$ is a positroid if and only if its matroid polytope $\Gamma_{\M}$ can be described by the equality $x_1 + \cdots + x_n = d$ and inequalities of form
$$\sum_{l \in [a,b]} x_l \leq \rk{[a,b]}, \text{ with } i,j \in [n].$$
\end{theorem}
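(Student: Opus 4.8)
The plan is to combine the classical description of a matroid polytope with the Grassmann necklace description of positroid bases, so that both directions reduce to one combinatorial equivalence between the Gale order and cyclic interval cardinality bounds. Recall the classical theorem of Edmonds (see \cite{Schrijver}): for any matroid $\M$ of rank $d$ on $[n]$, $\Gamma_{\M} = \{x \in \RR^n : x \geq 0,\ \sum_i x_i = d,\ \sum_{i \in A} x_i \leq rk(A) \text{ for all } A \subseteq [n]\}$. Let $Q$ be the polytope defined by $\sum_i x_i = d$ together with only the cyclic interval inequalities $\sum_{l \in [a,b]} x_l \leq rk([a,b])$. Nonnegativity is automatic in $Q$: taking the interval $[a+1,a-1] = [n] \setminus \{a\}$ gives $x_a = d - \sum_{l \neq a} x_l \geq d - rk([n]\setminus\{a\}) \geq 0$. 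So the content of the theorem is exactly that $Q = \Gamma_{\M}$ if and only if $\M$ is a positroid.

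First I would prove two lemmas that drive everything. (i) If $\I(\M) = (I_1,\ldots,I_n)$ is the Grassmann necklace of $\M$, with $I_a$ the $<_a$-minimal basis, then $rk([a,b]) = |I_a \cap [a,b]|$ for every cyclic interval. This is the greedy algorithm for matroids: the lex-minimal basis $I_a$ is obtained by scanning $[n]$ in $<_a$ order and keeping each element that preserves independence, so its intersection with the initial segment $[a,b]$ is a maximal independent subset of $[a,b]$, of size $rk([a,b])$. (ii) Gale's theorem \cite{Gale} rephrased: for $d$-sets $S,T$ one has $S \leq_a T$ if and only if $|S \cap [a,b]| \geq |T \cap [a,b]|$ for all $b$. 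Combining (i) and (ii), a $d$-set $B$ satisfies $B \geq_j I_j$ for all $j$ if and only if $|B \cap [a,b]| \leq |I_a \cap [a,b]| = rk([a,b])$ for all cyclic intervals; that is, the basis condition of Theorem 3 is literally the statement that $e_B$ is a $0/1$ point of $Q$.

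With these lemmas the forward direction becomes: if $\M$ is a positroid, its bases are exactly the $0/1$ points of $Q$, so all that remains is integrality, namely that every vertex of $Q$ is a $0/1$ vector, whence $Q = \Gamma_{\M}$. I expect this to be the main obstacle. One cannot simply sum cyclic inequalities over a decomposition of a general set $A$ into maximal cyclic intervals, since semimodularity (Theorem 1) only gives $rk(A) \leq \sum_k rk([a_k,b_k])$, the wrong direction; already $U_{1,4}$ with $A = \{1,3\}$ shows $rk(A) = 1 < 2 = rk(\{1\}) + rk(\{3\})$, and the correct certificate for $x_1 + x_3 \leq 1$ uses the larger interval $[1,3]$ together with $x_2 \geq 0$. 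The clean way to organize the redundancy of every non-cyclic $A$-inequality is to pass to partial-sum coordinates $z_k = x_1 + \cdots + x_k$, under which each cyclic interval inequality becomes a bound of the form $z_b - z_{a-1} \leq c$; then $Q$ is an alcoved polytope in the sense of Lam--Postnikov, and the integrality of alcoved polytopes with integral right-hand sides yields that $Q$ has $0/1$ vertices. This is the step where I would invoke the external machinery of \cite{LP}; the rest is bookkeeping.

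Finally the converse: suppose $\Gamma_{\M} = Q$. Form $\I = \I(\M)$ and the positroid $\M(\I)$ of Theorem 3. The inclusion $\B(\M) \subseteq \B(\M(\I))$ is automatic since each $I_j$ is the $<_j$-minimal basis. For the reverse inclusion, take $B \in \B(\M(\I))$, i.e. $B \geq_j I_j$ for all $j$; by Lemmas (i)--(ii) this says $e_B$ is a $0/1$ point satisfying all defining inequalities of $Q = \Gamma_{\M}$, hence a vertex, so $B \in \B(\M)$. Thus $\B(\M) = \B(\M(\I))$ and $\M = \M(\I)$ is a positroid. The two directions therefore share the combinatorial core (i)--(ii) and differ only in that the forward direction additionally requires the integrality of $Q$.
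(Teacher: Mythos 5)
The paper does not actually prove this statement: it is quoted as background directly from \cite{LP} and \cite{ARW}, so there is no internal proof to compare yours against. Judged on its own merits, your proposal is correct and is essentially a faithful reconstruction of the argument from the literature. Your two core lemmas are sound: lemma (i), that $rk([a,b]) = |I_a \cap [a,b]|$, is the greedy-algorithm fact that the paper itself later proves as Lemma~\ref{lem:intervalrk}, and lemma (ii), the reformulation $S \leq_a T \iff |S \cap [a,b]| \geq |T \cap [a,b]|$ for all $b$, is the standard initial-segment characterization of the Gale order; together they correctly identify the $0/1$ points of your polytope $Q$ with the sets satisfying $B \geq_j I_j$ for all $j$, i.e.\ with positroid bases via the Grassmann necklace characterization (Theorem 3 of the paper). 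Your converse is complete as written: $\Gamma_{\M} = Q$ forces $\B(\M(\I(\M))) \subseteq \B(\M)$ because a $0/1$ point with $d$ ones lying in a matroid polytope must itself be a basis indicator, and the opposite inclusion is Gale's theorem \cite{Gale} that the lex-minimal basis is Gale-minimal. The one load-bearing step you outsource is the integrality of $Q$ in the forward direction; your reduction to partial-sum coordinates is the right move (note that wrap-around intervals become $z_b - z_{a-1} \leq rk([a,b]) - d$ after substituting the equality $\sum_i x_i = d$, still of difference form), after which total unimodularity of difference systems --- equivalently the alcoved-polytope integrality of \cite{LP} --- gives integral vertices; to conclude the vertices are actually $0/1$ you should also record $x_a \leq rk(\{a\}) \leq 1$ from the singleton interval $[a,a]$, a companion to your nonnegativity observation. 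Since that integrality result is a general fact about alcoved polytopes, independent of positroids, invoking it is legitimate rather than circular, and with these two small points made explicit your proof is complete.
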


Once a positroid is fixed, a lot of inequalities in the above theorem do not actually correspond to facets of the matroid polytope. Later in the paper, we will show how to obtain the cyclic intervals that are flats (hence one can ignore all other intervals when studying the facets), directly from the decorated permutation.

\begin{remark}
\label{rem:nofixed}
It is enough to consider the positroid $\mathcal{M}'$ obtained by deleting any loops and coloops in order to study the structural properties of $\mathcal{M}$. For the remainder of this paper, we will assume positroids are loopless and coloopless. This means that the associated decorated permutations have no fixed points.
\end{remark}

We end the subsection with a way to count the rank of a positroid using its decorated permutation:
\begin{lemma}[Proposition 16.4 of \cite{Postnikov}, Definition 4.5 of \cite{ARW}]
\label{lem:rkexcedance}
The rank of a positroid $\M$ with an associated decorated permutation $\pi$ is obtained by counting the $i$-excedances of $\pi$ for any $i \in [n]$: the set of $x \in [n]$ such that $\pi^{-1}(x) >_i x$.
\end{lemma}

For example, the positroid associated to the decorated permutation $\pi$ of Figure~\ref{fig:permutation} has the following $1$-excedances: $\{4,5,3,10,11,1,12\}$. Therefore the rank of the positroid is $7$.

\subsection{Dual matroids}
Given a matroid $\M = ([n], \B)$, its \newword{dual} matroid $\M^*$ can be described in terms of the bases: take the collection of $[n] \setminus B$ for each $B \in \B(\M)$ to form $\B(\M^*)$.

Then we have a nice relation between the rank function of the original matroid and the dual matroid:

\begin{lemma}[Proposition 2.1.9 of \cite{Oxley}]
\label{lem:rkdual}
Let $\M$ be a matroid. We use $\drk{}$ to denote the rank function of the dual matroid $\M^*$. We have the following relation between the rank functions of $\M$ and its dual:
$$\drk{E} = \rk{E^c} + |E| - \rk{\M}.$$
\end{lemma}

Notice that plugging in $E = [n]$ above gives us $\rk{\M^*} = n - \rk{\M}$. Recall that positroids are indexed with a permutation. The dual matroid of a positroid turns out to be a positroid as well, with a nice relation between the associated permutations:

\begin{theorem}[Proposition 3.5 of \cite{ARW}, Corollary 13 of \cite{posifpsac}]
\label{thm:dualposi}
Let $\M$ be a positroid. Then its dual matroid $\M^*$ is also a positroid. Moreover, if $\M$ was indexed by the decorated permutation $\pi$ with no fixed points, then $\M^*$ is indexed by the permutation $\pi^{-1}$.
\end{theorem}

\section{Rank function of a positroid and non-crossing partitions}
In this section we review the result of \cite{OR}. The rank of a cyclic interval of a positroid is obtained by counting the number of counter-clockwise arrows of the permutation contained outside the interval. Rank of a set that consists of unions of cyclic intervals can be obtained in a similar manner, with the help of non-crossing partitions.

\begin{definition}
Let $\Pi = T_1 \sqcup \cdots \sqcup T_p$ be a partition of $[s]$ into pairwise disjoint non-empty subsets. We say that $\Pi$ is a \newword{non-crossing partition} if there are no cyclically ordered $a,b,c,d$ such that $a,c \in T_i$ and $b,d \in T_j$ for some $i \not = j$. We will call the $T_i$'s the \newword{blocks} of the partition.
\end{definition}


We will call an interval of form $[x,\pi(x)]$ a \newword{CW-arrow} of $\pi$, and an interval of form $[x,\pi^{-1}(x)]$ a \newword{CCW-arrow} of $\pi$ (each standing for clockwise and counterclockwise). Given any set $E \subseteq [n]$, we use $\cw{E}$ to denote the number of CW-arrows contained in $E$ (the entire interval has to be contained in $E$). Similarly, we will use $\ccw{E}$ for the number of CCW-arrows contained in $E$.

Let $E$ be any subset of the ground set. Then the \newword{natural rank bound} of $E$, written as $\nbd{E}$, is given by the rank of $\M$ minus the number of counter-clockwise arrows contained in the complement of $E$. 

Any set $E \subseteq [n]$ can be written as a disjoint union of cyclic intervals, $E = [a_1,b_1] \union \cdots \union [a_s,b_s]$ where $a_1,b_1,\ldots,a_s,b_s$ are cyclically ordered. Let $\Pi$ be an arbitrary non-crossing partition of $[s]$ with $T_1,\ldots,T_p$ as its parts. We define $E|_{T_i}$ as the subset of $E$ obtained by taking only the intervals indexed by elements of $T_i$. For example, $E|_{\{1,3\}}$ would stand for $E_1 \cup E_3 = [a_1,b_1] \cup [a_3,b_3]$.  We define $\nbd{E, \Pi}$ to be $\nbd{E|_{T_1}} + \cdots + \nbd{E|_{T_p}}$. 

\begin{theorem}[\cite{OR}]
\label{thm:rk}
Let $E = [a_1,b_1] \union \cdots \union [a_s,b_s]$ be a disjoint union of $s$ cyclic intervals, where $a_1,b_1,a_2,b_2,\ldots,a_s,b_s$ are cyclically ordered. We have $\rk{E} = \sample$, where $\NC{[s]}$ denotes the set of all non-crossing partitions of $[s]$.
\end{theorem}

\begin{figure}
\begin{center}
       \begin{tikzpicture}
\draw[black] (0:3) arc (0:360:30mm);

    \foreach \phi in {1,...,7}{
    \node[circle,draw,text=black,fill=white] (\phi) at (110+360/14 * -\phi:3cm) {$\phi$};
      }
    \foreach \phi in {8,...,10}{
    \node[circle,draw,text=black,fill=red] (\phi) at (110+360/14 * -\phi:3cm) {$\phi$};
      }    
    \foreach \phi in {11,...,14}{
    \node[circle,draw,text=black,fill=white] (\phi) at (110+360/14 * -\phi:3cm) {$\phi$};
      }
    \draw [thick, -latex] (6) to [bend left] (4);
    \draw [thick, -latex] (7) to [bend left] (5);
    \draw [thick, -latex] (12) to [bend left] (11);
    \draw [thick, -latex] (14) to [bend left] (12);

   \end{tikzpicture}
   \begin{tikzpicture}
\draw[black] (0:3) arc (0:360:30mm);

    \foreach \phi in {1,...,3}{
    \node[circle,draw,text=black,fill=red] (\phi) at (110+360/14 * -\phi:3cm) {$\phi$};
      }
    \foreach \phi in {4,...,14}{
    \node[circle,draw,text=black,fill=white] (\phi) at (110+360/14 * -\phi:3cm) {$\phi$};
      }
    \draw [thick, -latex] (6) to [bend left] (4);
    \draw [thick, -latex] (7) to [bend left] (5);
    \draw [thick, -latex] (11) to [bend left] (10);
    \draw [thick, -latex] (12) to [bend left] (11);
    \draw [thick, -latex] (14) to [bend left] (12);

   \end{tikzpicture}
    \caption{Information needed to compute $\nbd{E, \{\{1\},\{2\}\}}$ in the process of computing the rank of $E = [1,3] \union [8,10]$.}
		\label{fig:interval13810rk}

\end{center}
\end{figure}

For example, take a look at Figure~\ref{fig:interval13810rk} (the positroid is the one associated to Figure~\ref{fig:permutation}). When we take $E$ to be a cyclic interval, this has only one block and hence the only partition to consider is the trivial partition of $[1]$. If we take $E = [1,3]$, we have $\rk{E} = \nbd{E,[1]} = \nbd{E} = 7-5 = 2$, since the rank of the matroid is $7$ and there are five CCW-arrows outside $[1,3]$ as we can see from Figure~\ref{fig:interval13810rk}.

In order to compute the rank of $E = [1,3] \union [8,10]$ we need to find $\nbd{E, \{\{1\},\{2\}\}}$ and $\nbd{E,\{\{1,2\}\}}$: here $1$ stands for the first block $[1,3]$ and $2$ stands for the second block $[8,10]$. To compute $\nbd{E,\{\{1\},\{2\}\}} = \nbd{E_1} + \nbd{E_2} = \nbd{[1,3]} + \nbd{[8,10]}$, we look at the left figure of Figure~\ref{fig:interval13810rk}. The interval $[8,10]$ has $4$ CCW-arrows contained in its complement. Hence $\nbd{[8,10]} = 7-4 = 3$. Similarly, the interval $[1,3]$ has $5$ CCW-arrows contained in its complement, hence we get $\nbd{[1,3]} = 7-5 = 2$. From this we get that $\nbd{E,\{\{1\},\{2\}\}} = 3+2 = 5$. Now to compute $\nbd{E,\{\{1,2\}\}}$, we count how many CCW-arrows there are in the complement of $E$. We have $4$ as we can see from Figure~\ref{fig:interval13810}, so we get $\nbd{E,\{\{1,2\}\}} = 7-4 = 3$. To get the rank of $E$, we take the minimum of the two values $\nbd{E, \{\{1\},\{2\}\}} = 5 $ and $\nbd{E,\{\{1,2\}\}} = 3$ to end up with $\rk{E} = 3$.

\begin{corollary}
\label{cor:insepnbd}
Let $E = [a_1,b_1] \union \cdots \union [a_s,b_s]$ be a disjoint union of $s$ cyclic intervals, where $a_1,b_1,a_2,b_2,\ldots,a_s,b_s$ are cyclically ordered. If $\rk{E} = \nbd{E,\Pi}$ where $\Pi$ is not $[s]$, the trivial partition, then $E$ is separable. 
\end{corollary}
\begin{proof}
From the submodularity of the rank function, we have $\rk{A} + \rk{B} \geq \rk{A\cup B}$ when $A$ and $B$ are disjoint \cite{Oxley}. Let the blocks we get by partitioning $E$ according to $\Pi$ be $E_1,\ldots,E_p$. Here since $\Pi$ is not the trivial partition, we have $p \geq 2$. We get 
$$\rk{E_1} + \dots + \rk{E_p} \geq \rk{E} = \nbd{E,\Pi} = \nbd{E_1} + \dots + \nbd{E_p} \geq \rk{E_1} + \dots \rk{E_p},$$
where the first inequality comes from submodularity of the rank function and the second inequality comes from Theorem~\ref{thm:rk}. This implies that $E$ is separable.
\end{proof}

From this corollary, we can see that whenever $E$ is an inseparable set, its rank can be obtained very easily from the decorated permutation: since we have $\rk{E} = \nbd{E}$, all we need to do is count the number of CCW-arrows outside $E$ and negate it from the rank of $\M$. 

Finally, in the case $E$ is a cyclic interval, we can compute the rank using $cw(E)$ instead:
\begin{proposition}
\label{prop:usingcw}
Let $E=[a,b]$ be a cyclic interval. Then $\rk{E} = |E| - \cw{E}$.
\end{proposition}
\begin{proof}
Interpret $\rk{\M}$ as $a$-excedances of $\pi$ from Lemma~\ref{lem:rkexcedance}. Counting the $a$-excedances of $\pi$ is same as counting the CCW-arrows in $[a,a-1]$. From $\rk{E} = \nbd{E} = \rk{\M} - \ccw{E^c}$, the right hand side can be interpreted as the number of CCW-arrows of $[a,a-1]$ that lands somewhere among $E$. This can be counted by starting from total number of entries in $E$ then negating the CW-arrows in $E$.
\end{proof}

For example, again take a look at our running example in Figure~\ref{fig:permutation}. The interval $[1,3]$ has exactly one CW-arrow inside, so $\rk{[1,3]} = 3 - 1 = 2$. Similarly, the interval $[8,10]$ has no CW-arrows inside, so $\rk{[8,10]} = 3 - 0 = 3$.

\section{Interval flats and Inseparable flats}
We will show how to obtain the interval flats and inseparable flats directly from the decorated permutation. Recall that we are sticking to positroids that have no loops nor coloops as in Remark~\ref{rem:nofixed}. Using Theorem~\ref{thm:rk}, we get the following result:
\begin{theorem}
\label{thm:gencover}
Let $\M$ be a positroid with associated decorated permutation $\pi$ and let $E \subseteq [n]$ be an inseparable set. Then $E$ is a flat of $\M$ if and only if each element of $E^c$ is contained in some counter-clockwise arrow of $\pi$ contained in $E^c$. If this happens, we say that \newword{$E^c$ is covered by CCW-arrows}.
\end{theorem}

\begin{proof}
Let $E$ be the disjoint union of $s$ cyclic intervals. Since $E$ is inseparable, from Corollary~\ref{cor:insepnbd}, we have $\rk{E} = \nbd{E} < \nbd{E,\Pi}$ where $\Pi$ is any nontrivial non-crossing partition of $[s]$. Let $x$ be an arbitrary element of $[n] \setminus E$ and set $Ex$ to denote $E \union \{x\}$. We can think of $Ex$ having $s+1$ cyclic intervals (the interval consisting of the lone element $x$ might be adjacent to another cyclic interval, but it does not matter. Index that lone interval as the $s+1$-th interval). Let $\Pi'$ be an arbitrary non-crossing partition of $[s+1]$ and let $\Pi$ be obtained from $\Pi'$ by deleting the element $s+1$. Then we have $\nbd{E} < \nbd{E,\Pi} \leq \nbd{Ex,\Pi'}$ for any $\Pi'$ that isn't $\{\{1,\ldots,s\},\{s+1\}\}$ or trivial. In the case $\Pi' = \{\{1,\ldots,s\},\{s+1\}\}$, from the assumption that $\M$ is loopless, we get $\nbd{Ex,\{\{1,\ldots,s\},\{s+1\}\}} = \nbd{E} + \nbd{x} > \nbd{E}$ (since $x$ is not a loop, we have $\nbd{x} \geq \rk{x} \geq 1$). 

Hence aside from the case $\Pi'$ is trivial, we know that $\nbd{E} < \nbd{Ex,\Pi'}$. From this we can conclude that $\rk{E} < \rk{Ex}$ if and only if $\nbd{E} < \nbd{Ex}$. Now notice that $\nbd{E} < \nbd{Ex}$ if and only if there is a CCW-arrow outside $E$ that contains $x$. Finally recall that $E$ is a flat if and only if $\rk{E} < \rk{Ex}$ for all $x \in [n] \setminus E$. Combining these two statements we get the desired result.
\end{proof}

In the proof of the above theorem, what we actually require on $E$ is to satisfy $\rk{E} = \nbd{E}$ instead of being inseparable. In the case $E$ is a cyclic interval, we have $\rk{E} = \nbd{E}$ even when $E$ is separable, since $E$ only has one block and hence only possible option for $\Pi$ is the trivial partition of $[1]$. So as a corollary, we get:

\begin{theorem}
\label{thm:intervalcover}
Let $\M$ be a positroid and let $E \subseteq [n]$ be a cyclic interval. Then $E$ is a flat of $\M$ if and only if $E^c$ is covered by CCW-arrows.
\end{theorem}

For example, take a look at Figure~\ref{fig:interval110}. The complement of the interval $[1,10]$ is covered by CCW-arrows disjoint from $[1,10]$. So this is a flat. On the other hand, in the complement of the interval $[1,3]$, the elements $8$ and $9$ in particular are not covered by CCW-arrows outside $[1,3]$. So $[1,3]$ is not a flat (its closure is $[1,3] \union [8,9]$).

\begin{remark}
Beware that we only care about integers of an interval when we discuss the covering of an interval. For example, if there are two CCW-arrows $[7,9]$ and $[10,11]$, we say that $[7,11]$ is covered by CCW-arrows even if there is no CCW-arrow covering the region between $9$ and $10$.
\end{remark}

\begin{figure}
    \centering
       \begin{tikzpicture}
\draw[black] (0:3) arc (0:360:30mm);
    \foreach \phi in {1,...,10}{
    \node[circle,draw,text=black,fill=red] (\phi) at (110+360/14 * -\phi:3cm) {$\phi$};
      }    
    \foreach \phi in {11,...,14}{
    \node[circle,draw,text=black,fill=white] (\phi) at (110+360/14 * -\phi:3cm) {$\phi$};
      }
    \draw [thick, -latex] (12) to [bend left] (11);
    \draw [thick, -latex] (14) to [bend left] (12);

   \end{tikzpicture}
   \begin{tikzpicture}
\draw[black] (0:3) arc (0:360:30mm);

    \foreach \phi in {1,...,3}{
    \node[circle,draw,text=black,fill=red] (\phi) at (110+360/14 * -\phi:3cm) {$\phi$};
      }
    \foreach \phi in {4,...,14}{
    \node[circle,draw,text=black,fill=white] (\phi) at (110+360/14 * -\phi:3cm) {$\phi$};
      }
    \draw [thick, -latex] (6) to [bend left] (4);
    \draw [thick, -latex] (7) to [bend left] (5);
    \draw [thick, -latex] (11) to [bend left] (10);
    \draw [thick, -latex] (12) to [bend left] (11);
    \draw [thick, -latex] (14) to [bend left] (12);

   \end{tikzpicture} 
    \caption{The interval $[1,10]$ is a flat. The interval $[1,3]$ is not.}
		\label{fig:interval110}   
\end{figure}

\begin{remark}
The study of cyclic intervals that are flats was motivated from the \newword{essential intervals} studied in \cite{Knutson}. We would like to point out that the set of essential intervals and the set of interval flats are incomparable: there are essential intervals that are not flats and there are interval flats that are not essential.
\end{remark}

Although not used for our main result, it is worth noting that arbitrary intersection of interval flats can be described using a similar criterion.

\begin{corollary}
\label{cor:generalcover}
Let $E$ be an arbitrary subset of $[n]$. Then $E$ is the intersection of interval flats if and only if $E^c$ is covered by CCW-arrows.
\end{corollary}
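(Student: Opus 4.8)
The plan is to deduce both directions from Theorem~\ref{thm:intervalcover} by working gap-by-gap on the complement of $E$. For the forward direction, suppose $E = \bigcap_i F_i$ where each $F_i = [a_i,b_i]$ is an interval flat, and take any $y \notin E$. Then $y \notin F_i$ for some $i$, so $y \in (b_i,a_i)$; since $F_i$ is a flat, Theorem~\ref{thm:intervalcover} supplies an interval $[x,\pi^{-1}(x)]$ that contains $y$ and is disjoint from $F_i$. Because $E \subseteq F_i$, this interval is also disjoint from $E$. As $y$ was an arbitrary point of the complement, $[n]\setminus E$ is covered by intervals of the desired form, so this direction is immediate.

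For the converse I would first record the key structural observation: any interval $[x,\pi^{-1}(x)]$ that is disjoint from $E$ lies inside a single maximal cyclic gap of $[n]\setminus E$. Indeed, such an interval is contained in $[n]\setminus E$, and the maximal gaps of the complement are separated from one another by elements of $E$; since $[x,\pi^{-1}(x)]$ is a single arc in the cyclic order, it cannot meet two different gaps without containing an intervening element of $E$. Assuming $E$ is a nonempty proper subset, write $[n]\setminus E = C_1 \sqcup \cdots \sqcup C_m$ as the disjoint union of its maximal gaps, and for each $j$ set $F_j := [n]\setminus C_j$, a genuine cyclic interval $[a_j,b_j]$ with $(b_j,a_j)=C_j$ and $E \subseteq F_j$.

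It then remains to see that each $F_j$ is a flat and that $\bigcap_j F_j = E$. The hypothesis gives a covering of $[n]\setminus E = \bigcup_j C_j$ by intervals $[x,\pi^{-1}(x)]$ disjoint from $E$; by the observation above, any such interval that covers a point of $C_j$ is itself contained in $C_j$, hence disjoint from $F_j$. Thus $(b_j,a_j)=C_j$ is covered by intervals $[x,\pi^{-1}(x)]$ disjoint from $F_j$, and Theorem~\ref{thm:intervalcover} shows that $F_j$ is an interval flat. Finally $\bigcap_j F_j = [n]\setminus \bigcup_j C_j = E$, exhibiting $E$ as an intersection of interval flats.

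I expect the main obstacle to be the structural observation that a disjoint-from-$E$ arc cannot straddle two gaps; this is where the cyclic geometry must be argued carefully rather than waved at, and it is precisely what lets the global covering hypothesis refine into a separate covering of each gap. I would also dispose of the degenerate cases separately: when $E = [n]$ the complement is empty and $E = [1,n]$ is itself an interval flat, while the borderline case $E = \emptyset$ should be checked directly against the convention for cyclic intervals, since a single gap equal to all of $[n]$ has no endpoints lying in $E$.
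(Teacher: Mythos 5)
Your proof is correct and follows essentially the same route as the paper: both directions reduce to Theorem~\ref{thm:intervalcover} applied to the complements of the maximal cyclic gaps of $E^c$, together with the observation that an interval $[x,\pi^{-1}(x)]$ disjoint from $E$ must lie inside a single gap. The only differences are cosmetic refinements in your favor: your forward direction picks, for each $y \notin E$, one flat of the given family missing $y$ (avoiding the paper's reorganization of $E$ into the canonical gap-complement flats via the fact that intersections of flats are flats), and you make explicit both the no-straddling observation and the degenerate case $E = \emptyset$, which the paper leaves implicit.
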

\begin{proof}
Let $E = [a_1,b_1] \union \cdots \union [a_s,b_s]$ be a disjoint union of $s$ cyclic intervals, where $a_1,b_1,a_2,b_2,\ldots,a_s,b_s$ are cyclically ordered. Notice that for each $i$, the interval $[a_i,b_{i-1}]$ is a flat since $(b_{i-1},a_i)$ is covered by CCW-arrows. To get $E$, simply take the intersection of all $[a_i,b_{i-1}]$'s.

\end{proof}

\begin{figure}[h]
    \centering
       \begin{tikzpicture}
    \draw[black] (0:3) arc (0:360:30mm);
    \foreach \phi in {1,...,3}{
    \node[circle,draw,text=black,fill=red] (\phi) at (110+360/14 * -\phi:3cm) {$\phi$};
      }
    \foreach \phi in {8,...,10}{
    \node[circle,draw,text=black,fill=red] (\phi) at (110+360/14 * -\phi:3cm) {$\phi$};
      }          
    \foreach \phi in {4,...,7}{
    \node[circle,draw,text=black,fill=white] (\phi) at (110+360/14 * -\phi:3cm) {$\phi$};
      }
    \foreach \phi in {11,...,14}{
    \node[circle,draw,text=black,fill=white] (\phi) at (110+360/14 * -\phi:3cm) {$\phi$};
      }      
    \draw [thick, -latex] (12) to [bend left] (11);
    \draw [thick, -latex] (14) to [bend left] (12);
    \draw [thick, -latex] (6) to [bend left] (4);
    \draw [thick, -latex] (7) to [bend left] (5);
   \end{tikzpicture}
    \caption{The set $[1,3] \union [8,10]$ is the intersection of flats $[1,10]$ and $[8,3]$.}
		\label{fig:interval13810}

\end{figure}

For example, take a look at Figure~\ref{fig:interval13810}. The complement of $[1,3] \union [8,10]$ consists of the intervals $(3,8)$ and $(10,1)$. And each of those intervals is covered by CCW-arrows that do not intersect $[1,3] \union [8,10]$. Hence $[1,3] \union [8,10]$ is the intersection of interval flats. In particular, it is the intersection of flats $[1,10]$ and $[8,3]$.

\section{Facets of polytopes}
In this section, we state our main result using the tools from the previous section.

\begin{theorem}
Let $\M$ be a positroid of rank $d$ on $[n]$ and $\pi$ be its associated decorated permutation. Its matroid polytope $\Gamma_{\M}$ can be described by the inequalities $x_i \geq 0$ for all $i \in [n]$, the equality $x_1 + \cdots + x_n = d$ and inequalities of form
$$\sum_{l \in E} x_l \leq d - \ccw{E^c}$$
where $E$ is a cyclic interval whose complement is covered by CCW-arrows of $\pi$ and $\ccw{E^c}$ counts the number of CCW-arrows in $E^c$.
\end{theorem}

\begin{proof}
From combining Theorem~\ref{thm:matpoly} and Theorem~\ref{thm:intervalcover}, it is enough to show that the  flacets of a positroid are all interval flats. Assume for sake of contradiction that we have some set $E \subset [n]$ that is not a cyclic interval, but is a flacet. Let $E = [a_1,b_1] \union \cdots \union [a_s,b_s]$ be a disjoint union of $s \geq 2$ cyclic intervals, where $a_1,b_1,a_2,b_2,\ldots,a_s,b_s$ are cyclically ordered. Since $E$ is a flacet, from Corollary~\ref{cor:insepnbd}, we have $\rk{E} = \nbd{E}$.

Therefore $\rk{E} = \rk{\M} - \sum_i \ccw{(b_i,a_{i+1})}$. Now let us try to convert this to information to that of $\M^{*}$, which is a positroid indexed by $\pi^{-1}$ thanks to Theorem~\ref{thm:dualposi}. We are going to add $|E^c| - \rk{\M}$ to both sides. Left hand side becomes $\drk{E^c}$ from Lemma~\ref{lem:rkdual}. Right hand side becomes $|E^c| - \sum_i \ccw{(b_i,a_{i+1})} = \sum_i \drk{(b_i,a_{i+1})}$ thanks to Proposition~\ref{prop:usingcw}. This means that $E^c$ is separable in $\M^{*}$ which leads to a contradiction.
\end{proof}

For example, take a look at the positroid coming from the decorated permutation of Figure~\ref{fig:permutation}. Recall that $[1,10]$ is a flat and $[1,3]$ is not. Hence $x_1 + \cdots x_{10} =  7 - 2$ (there are $2$ counter-clockwise arrows contained outside $[1,10]$) is one of the defining hyperplanes of the matroid polytope. And $x_1 + \cdots + x_3 = t$ for any number $t$ is not.

We also get an analogous result for independent sets:

\begin{theorem}
Let $\M$ be a positroid of rank $d$ on $[n]$ and $\pi$ be its associated decorated permutation. Its independent set polytope $\Gamma_{\M}$ can be described by inequalities $x_i \geq 0$ for each $i \in [n]$ and inequalities of form
$$\sum_{l \in E} x_l \leq d - \ccw{E^c}$$
where $E$ is a subset of $[n]$ whose complement is covered by CCW-arrows of $\pi$ and $\ccw{E^c}$ counts the number of CCW-arrows in $E^c$.
\end{theorem}

\begin{proof}
This follows from Theorem~\ref{thm:indpoly} with Theorem~\ref{thm:gencover}.
\end{proof}

For example, again look at our running example of Figure~\ref{fig:permutation}. For the independent set polytope, aside from the interval flats, we also have to consider ones that are not intervals. One of the inseparable flats was given by $[1,3] \union [8,10]$ from Figure~\ref{fig:interval13810}, so the corresponding facet of the independent set polytope of the positroid is given by $x_1 + x_2 + x_3 + x_8 + x_9 + x_{10} = 3$, since $\rk{[1,3] \union [8,10]} = 7-4$.

\section{Application to concordancy check}
In this section we look at how our result, mainly Theorem~\ref{thm:intervalcover}, can be applied to checking whether two positroids are \newword{concordant} or not: two matroids of different rank are concordant to each other if they can be realized using the same matrix (one using all the rows, the other cutting off few rows from the bottom). The condition for a positroid to be concordant with a uniform matroid was studied in \cite{quotuni}.

It is known that concordancy can be described using flats:

\begin{theorem}
\label{thm:concordflat}
[Proposition 7.3.6 of \cite{Oxley}]
Two matroids $\M_1$ and $\M_2$ are concordant if and only if every flat of $\M_1$ is a flat of $\M_2$.
\end{theorem}

From this well known result and our result on when exactly the cyclic intervals of a positroid is a flat or not, we get the following proposition:

\begin{proposition}
	Let us have two positroids of rank $a$ and $b$ ($a < b$) each indexed by permutations
$\pi$ and $\mu$ that are concordant. Then every CCW-arrow of $\mu$ is covered by CCW-arrows of $\pi$.
\end{proposition}
\begin{proof}
Each CCW-arrow $[a,b]$ of $\mu$ gives a flat $(b,a)$ in the corresponding positroid. Since this must again be a flat in the positroid indexed by $\pi$ thanks to Theorem~\ref{thm:concordflat}, the interval $[a,b]$ has to be covered by CCW-arrows of $\pi$. 

\end{proof}

An example is shown in Figure~\ref{fig:concordancy}. We have (decorated) permutations $\pi = 35476821$ and $\mu = 46782351$. Each CCW-arrow of $\mu$ is covered by CCW-arrows of $\pi$. For example, take the CCW-arrow $(4,8)$ in $\mu$. This is covered by $(4,7)$ and $(6,8)$ which are CCW-arrows in $\pi$.

From this we can ask if the converse is true as well:

\begin{conjecture}
	Let us have two positroids of rank $a$ and $b$ ($a < b$) each indexed by permutations
$\pi$ and $\mu$. The two positroids are concordant if and only if every CCW-arrow of $\mu$ is covered by CCW-arrows of $\pi$.
\end{conjecture}

\section*{Acknowledgement}
The authors would like to thank the organizers of the $2016$ and $2017$ summer mathcamp hosted at Texas State University, where this project started. The authors would also like to thank Lillian Bu, Wini Taylor-Williams, Andrew Lu, Claire Zhou and Brandon Chen for useful discussions. We would also like to thank the anonymous referee for suggestions on improving the paper.

\begin{figure}
    \centering
       \begin{tikzpicture}
\draw[black] (0:3) arc (0:360:3cm);
    \foreach \phi in {1,...,8}{
    \node[circle,draw,text=black,fill=white] (\phi) at (110+360/8 * -\phi:3cm) {$\phi$};
      }
    \draw [thick, -latex] (1) to (3);
    \draw [thick, -latex] (2) to (5);
    \draw [thick, -latex] (3) to (4);
    \draw [thick, -latex,blue] (4) to (7);
    \draw [thick, -latex] (5) to (6);
    \draw [thick, -latex,blue] (6) to (8);
    \draw [thick, -latex] (7) to (2);
    \draw [thick, -latex] (8) to (1);
   \end{tikzpicture}
\hspace{1cm}
\begin{tikzpicture}
\draw[black] (0:3) arc (0:360:3cm);
    \foreach \phi in {1,...,8}{
    \node[circle,draw,text=black,fill=white] (\phi) at (110+360/8 * -\phi:3cm) {$\phi$};
      }
    \draw [thick, -latex] (1) to (4);
    \draw [thick, -latex] (2) to (6);
    \draw [thick, -latex] (3) to (7);
    \draw [thick, -latex,blue] (4) to (8);
    \draw [thick, -latex] (5) to (2);
    \draw [thick, -latex] (6) to (3);
    \draw [thick, -latex] (7) to (5);
    \draw [thick, -latex] (8) to (1);
   \end{tikzpicture}
    \caption{Each and every CCW-arrow of $\mu = [4,6,7,8,2,3,5,1]$ (right figure) is covered by CCW-arrows of $\pi = [3,5,4,7,6,8,2,1]$ (left figure).}
		\label{fig:concordancy}
\end{figure}

\bibliographystyle{plain}    
\bibliography{bibliography}

\end{document}